\newtheorem{thm}{Theorem}
\newtheorem{lem}{Lemma}
\newtheorem{cor}{Corollary}
\theoremstyle{remark}
\newtheorem{rem}{Remark}
\newtheorem*{acknowledgments}{Acknowledgments}
\theoremstyle{definition}
\newtheoremstyle{notes}
{3pt}
{3pt}
{}
{}
{\bfseries}
{:}
{.5em}
{}
\theoremstyle{notes}
\newtheorem*{keywords}{Keywords}
\newtheorem*{subjclass}{AMS MSC 2010}
\DeclareMathOperator*{\sech}{sech}
\DeclareMathOperator*{\Real}{Re}
\DeclareMathOperator*{\Imaginary}{Im}
\DeclareMathOperator*{\Var}{Var}
\title{Conformal Skorokhod embeddings\\ and related extremal problems}
\author{Phanuel Mariano\footnote{Supported in part by an AMS-Simons Travel Grant 2019-2021}\\
\href{mailto:PMariano@newhaven.edu}{\texttt{{\small PMariano@newhaven.edu}}}
\and
Hugo Panzo\footnote{Supported at the Technion by a Zuckerman Fellowship.}\\ 
\href{mailto:panzo@campus.technion.ac.il}{\texttt{{\small panzo@campus.technion.ac.il}}}}
\date{\today}
\begin{document}

\maketitle

\begin{abstract}
The conformal Skorokhod embedding problem (CSEP) is a planar variant of the classical problem where the solution is now a simply connected domain $D\subset\mathbb{C}$ whose exit time embeds a given probability distribution $\mu$ by projecting the stopped Brownian motion onto the real axis. In this paper we explore two new research directions for the CSEP by proving general bounds on the principal Dirichlet eigenvalue of a solution domain in terms of the corresponding $\mu$ and by proposing related extremal problems. Moreover, we give a new and nontrivial example of an extremal domain $\mathbb{U}$ that attains the lowest possible principal Dirichlet eigenvalue over all domains solving the CSEP for the uniform distribution on $[-1,1]$. Remarkably, the boundary of $\mathbb{U}$ is related to the Grim Reaper translating solution to the curve shortening flow in the plane. The novel tool used in the proof of the sharp lower bound is a precise relationship between the widths of the orthogonal projections of a simply connected planar domain and the support of its harmonic measure that we develop in the paper. The upper bound relies on spectral bounds for the torsion function which have recently appeared in the literature.
\end{abstract}

\begin{keywords}
conformal Skorokhod embedding; principal eigenvalue; harmonic measure; torsion function; Grim Reaper curve; catenary of equal resistance.
\end{keywords}

\begin{subjclass}
Primary 60G40, 60J65; Secondary 30C20, 30C70, 30C85.
\end{subjclass}


\section{Introduction}

Let $W=(W_t:t\geq 0)$ be a complex Brownian motion starting at $0$, and for any open set $D\subset\mathbb{C}$ containing $0$, let $\tau_D$ denote the first exit time of $W$ from $D$. Moreover, let $\mu$ be a probability distribution with zero mean and finite nonzero variance. The \emph{conformal Skorokhod embedding problem} (CSEP) was introduced by Gross in \cite{Gross2019} where the author shows that there exists a simply connected domain $D\subset\mathbb{C}$ containing $0$ such that
\begin{equation}\label{eq:CSEP}
\left\{
\begin{array}{cc}
\Real W_{\tau_D}\sim\mu\\
\\
\mathbb{E}_0[\tau_D]<\infty.
\end{array}
\right.
\end{equation}

This result was recently generalized in \cite{Baudabra-Markowsky2019}. There the authors showed that if $\mu$ is a probability distribution with zero mean and finite nonzero $p$-th moment for some $1<p<\infty$, then there exists a simply connected domain $D\subset\mathbb{C}$ containing $0$ such that $ \Real W_{\tau_D}\sim\mu$ and $\mathbb{E}_0[\tau_D^{p/2}]<\infty$. They also give conditions on the domain which ensure that a solution domain is unique.

The original Skorokhod embedding problem (SEP) asks the following: Given a standard Brownian motion $(B_t:t\geq 0)$ and a probability distribution $\mu$ with zero mean and finite variance, find a stopping time $T$ such that $B_T\sim\mu$ and $\mathbb{E}_0\left[T\right]<\infty$. It was first posed and solved by Skorokhod in 1961 (see \cite{Skorokhod-1965} for an English translation of the original paper) and since then a veritable zoo of varied and interesting solutions have appeared, see \cite{Jan2004} for a thorough survey. The similarity between these problems is clear, and as Gross points out, his solution resembles Root's barrier hitting solution \cite{Root1969} with additional randomness, that of the other Brownian motion in the second coordinate. 

Gross's paper is more than just an existence result and his method gives a relatively explicit construction of a domain $D$ which solves \eqref{eq:CSEP}. In fact, his example of a bounded domain which solves the CSEP for the uniform distribution on $[-1,1]$ is what inspired the present paper as it complements an unbounded domain $\mathbb{U}$ known to the authors which achieves the same objective. We discovered $\mathbb{U}$ through a naive ansatz that uses the conformal invariance of Brownian motion and the Cauchy-Riemann equations to prescribe a conformal deformation of the upper half-plane $\mathbb{H}$ (whose harmonic measure is Cauchy distributed) such that the orthogonal projection of the harmonic measure of the deformed domain has uniform distribution on $[-1,1]$. Remarkably, the boundary of $\mathbb{U}$ is related to the so-called \emph{Grim Reaper curve} which is a translating solution to the curve shortening flow in the plane, see Remark \ref{rem:Grim_Reaper}. 

Besides bringing to light the new example of $\mathbb{U}$, the main goals of this paper are to obtain \textbf{sharp bounds} on the principal Dirichlet eigenvalue of a solution domain in terms of the corresponding $\mu$ and to \textbf{propose related extremal problems}. For a domain $D\subset\mathbb{C}$ which solves the CSEP for $\mu$, define the \emph{rate} of the solution by
\[
\lambda(D)=-\lim_{t\to\infty}\frac{1}{t}\log\mathbb{P}_0(\tau_D>t).
\]
The limit exists by Fekete's subadditivity lemma and is clearly nonnegative. Since $D$ is an open set containing $0$, the process $W$ will exit some small enough ball with positive radius centered at $0$ before it exits $D$ so it follows that $\lambda(D)$ is finite. Additionally, $2\,\lambda(D)$ is equal to the bottom of the spectrum of the semigroup generated by the Laplacian on $D$ with Dirichlet boundary conditions and is usually referred to as the \emph{principal Dirichlet eigenvalue} of $D$, see Section 3.1 of \cite{Sznitman}. An example which will prove useful later on is that of an infinite strip of width $w$ containing $0$ where a straightforward projection argument shows that the rate is half the principal eigenvalue of an interval of length $w$, namely $\frac{\pi^2}{2w^2}$. Two questions regarding the rate naturally present themselves:
\begin{enumerate}[label=\Alph*.]
\item Find upper and lower bounds on the rate (or principal Dirichlet eigenvalue) in terms of $\mu$. \label{q:1}
\item For a specific $\mu$, find extremal domains that attain the highest and lowest possible rate (or principal Dirichlet eigenvalue). \label{q:2}
\end{enumerate}

The \textbf{main results} of the paper are the following: We answer Question \ref{q:1} by giving an upper bound in terms of the variance of $\mu$ in Theorem \ref{thm:upper_bound} and by giving a sharp lower bound in terms of the width of the support of $\mu$ in Theorem \ref{thm:lower_bound}. The proof of Theorem \ref{thm:upper_bound} relies on recent spectral bounds for the torsion function which are described at the beginning of Section \ref{sec:Main_Result}. In order to prove Theorem \ref{thm:lower_bound}, we need a precise relationship between the width of the orthogonal projection of a simply connected planar domain and the width of the support of the orthogonal projection of its harmonic measure. This result, which is stated in Theorem \ref{thm:support_theorem} and proven in Section \ref{sec:support}, may be of independent interest. As far as the authors know, it hasn't appeared in the literature. Finally, we give a partial answer to Question \ref{q:2} in the case of the uniform distribution on $[-1,1]$, henceforth denoted by $\mathrm{U}[-1,1]$, by verifying in Theorem \ref{thm:uniform_curve} that $\mathbb{U}$ is a nontrivial minimal rate solution domain. This example shows that the lower bound in Theorem \ref{thm:lower_bound} is indeed sharp. 

About one month after these results first appeared on arXiv, Boudabra and Markowsky posted a preprint \cite{Baudabra-Markowsky2020a} that gives an interesting new method to construct minimal rate solution domains to the CSEP. We point out that the minimality of their solution domains is a consequence of our general lower bound given in Theorem \ref{thm:lower_bound} which itself relies crucially on the most technical result of our paper, that of Theorem \ref{thm:support_theorem}. Additionally, their result shows the importance of Theorem \ref{thm:lower_bound} by demonstrating that our lower bound is sharp in general and not just for $\mathrm{U}[-1,1]$ which was already known from Theorem \ref{thm:uniform_curve}. 

Another example of a minimal rate solution domain which is in some sense trivial can be found in \cite{Baudabra-Markowsky2019} where it was shown that the infinite strip $\{z\in\mathbb{C}:|\Imaginary z|<1\}$ and the domain bounded below by the parabola $y=\frac{1}{2}x^2-\frac{1}{2}$ both solve the CSEP for the hyperbolic secant density $\frac{1}{2}\sech\frac{\pi}{2}x$ on $\mathbb{R}$ with the infinite strip being Gross's solution, see Figure \ref{fig:parabola}. Since the parabola domain has infinite inradius, it follows that its principal eigenvalue is $0$ and hence it trivially has minimal rate. Finding a maximal rate solution to the CSEP remains an open problem and one wonders if Gross's construction might be the answer.

Question \ref{q:2} is in the same spirit as the classical SEP where different solutions often have their own extremal property. For instance, given a probability distribution $\mu$ with zero mean and finite variance, Root's embedding minimizes the variance of $T$ over all stopping times $T$ such that $B_T\sim\mu$ and $\mathbb{E}_0\left[T\right]<\infty$, while Rost's reversed barrier embedding maximizes the variance of $T$ over the same class of stopping times. We refer to \cite{extremal_Skorokhod} where extremal solutions to the classical SEP are studied in detail.

Extremal problems for Brownian motion apart from the SEP have also been a popular topic of study. A common theme is optimizing the principal Dirichlet eigenvalue or the maximum expected exit time of a domain taken over all starting points when various constraints are given, see \cite{Banuelos-Carrol1994, conditioned_extremal, Kawohl_Sweers, Banuelos-Carrol2011, Steinerberger_Hoskins, improved_Vogt} and references therein for some examples.  

The CSEP also has a connection to Walden and Ward's \emph{harmonic measure distributions} \cite{Walden_Ward} that is worth mentioning. Indeed, if the domain $D$ solves the CSEP for $\mu$, then the orthogonal projection of the harmonic measure of $D$ with pole at $0$ onto the real axis has distribution $\mu$. If instead we are given a distribution function $F:(0,\infty)\to [0,1]$, then one might ask whether there exists a domain $D\subset\mathbb{C}$ containing $0$ such that the circular projection of the harmonic measure of $D$ with pole at $0$ onto the positive real axis has distribution function $F$? This is a topic of current interest and the reader is directed to the recent survey \cite{Snipes_Ward} for more on this question. Perhaps an adaptation of Gross's construction will prove fruitful in this area.

\section{Main Results}\label{sec:Main_Result}

Our first main result is an upper bound on $\lambda(D)$ for $d$-dimensional Brownian motion in terms of the second moments of the components at time $\tau_D$. When applied to a domain $D\subset\mathbb{C}$ which solves the CSEP, this general result provides a partial answer to Question \ref{q:1} The proof involves a \emph{spectral upper bound for the torsion function} of Brownian motion. Recall that the torsion function of a domain $D$ is nothing but the expected exit time of Brownian motion from $D$ as a function of the starting point and is given by
\begin{equation*}
\begin{split}
u_D(x)&=\mathbb{E}_x\left[\tau_D\right],~x\in D\\
&=\begin{cases}-\frac{1}{2}\Delta u_D=1\\ 
u_D\in H_0^1(D).\end{cases}
\end{split}
\end{equation*}
The best constant in the spectral upper bound for the torsion function depends on the dimension $d$ and is defined by
\begin{equation}\label{eq:best_constant}
C_d=\sup\left\{ \lambda\left(D\right)\|u_D\|_\infty\colon D\subset\mathbb{R}^d~\text{is a domain with}~\lambda\left(D\right)>0\right\}.
\end{equation}
It was shown in \cite{vdB_Carroll} that $\|u_D\|_\infty<\infty$ if and only if $\lambda(D)>0$, so it follows from \eqref{eq:best_constant} that the spectral upper bound for the torsion function
\begin{equation}\label{eq:spectral_torsion}
\|u_D\|_\infty\leq C_d\,\lambda(D)^{-1}
\end{equation}
holds for all domains $D$.
 
While computing $C_d$ exactly for $d>1$ is still an open problem, bounds on $C_d$ for Brownian motion and related processes have been studied extensively via several techniques under various assumptions on $D$, see for instance \cite{Banuelos-Carrol1994,vdB_Carroll,Giorgi-Smits-2010,Vandenberg-2017a,Panzo2020}. The current best explicit upper bound in the Brownian case was derived by H. Vogt in \cite{Vogt} and states that 
\begin{equation}\label{eq:VogtBound}
C_d\leq\frac{d}{8}+\frac{1}{4}\sqrt{5\left(1+\frac{1}{4}\log 2\right)d}+1.
\end{equation}
See \cite{improved_Vogt} for a non-explicit improvement of \eqref{eq:VogtBound} along with new results on the $p$-torsion analogue of \eqref{eq:spectral_torsion} which involves the $p$-th moment of the exit time. 

With the spectral upper bound for the torsion function \eqref{eq:spectral_torsion} at our disposal, we can now state our first main result.
\begin{thm}\label{thm:upper_bound}
Suppose $D$ is an open subset of $\mathbb{R}^d$ which contains $0$ and let $W=(W_t:t\geq 0)$ be a $d$-dimensional Brownian motion starting at $0$ with $W_t=(W_t^{(1)},\dots,W_t^{(d)})$. Then for each $1\leq i\leq d$ we have
\[
\lambda(D)\leq C_d\,\mathbb{E}_0\left[\left(W^{(i)}_{\tau_D}\right)^2\right]^{-1}.
\]
In particular, if $D$ solves the CSEP \eqref{eq:CSEP} for a probability distribution $\mu$ with zero mean and finite variance, then 
\begin{equation}\label{eq:CSEP_bound}
\lambda(D)\leq \frac{C_2}{\Var\mu}\leq\frac{2.0379}{\Var\mu}.
\end{equation}
\end{thm}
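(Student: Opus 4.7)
The strategy is to reduce the statement to the spectral bound \eqref{eq:spectral_torsion} via the martingale identity $\mathbb{E}_0\!\left[(W^{(i)}_{\tau_D})^2\right]=\mathbb{E}_0[\tau_D]=u_D(0)$. Once this identity is in hand, the inequality
\[
u_D(0)\le \|u_D\|_\infty \le C_d\,\lambda(D)^{-1}
\]
immediately rearranges to the desired bound.

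First I would dispose of the degenerate case $\lambda(D)=0$: there the right-hand side is nonnegative and the inequality is trivial. So assume $\lambda(D)>0$. By \eqref{eq:spectral_torsion} and the result of \cite{vdB_Carroll} cited in the excerpt, $u_D(0)\le\|u_D\|_\infty<\infty$, so in particular $\mathbb{E}_0[\tau_D]<\infty$. Now apply It\^o's formula (equivalently, optional stopping) to the martingale $(W^{(i)}_t)^2-t$ at the bounded stopping time $t\wedge\tau_D$ to obtain
\[
\mathbb{E}_0\!\left[(W^{(i)}_{t\wedge\tau_D})^2\right]=\mathbb{E}_0[t\wedge\tau_D].
\]
The right-hand side converges to $\mathbb{E}_0[\tau_D]=u_D(0)$ by monotone convergence, while the left-hand side is bounded in $t$ by $\mathbb{E}_0[\tau_D]$, so $\bigl(W^{(i)}_{t\wedge\tau_D}\bigr)_{t\ge 0}$ is an $L^2$-bounded martingale converging a.s. and in $L^2$ to $W^{(i)}_{\tau_D}$. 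Passing to the limit yields $\mathbb{E}_0\!\left[(W^{(i)}_{\tau_D})^2\right]=u_D(0)$, and combining with \eqref{eq:spectral_torsion} gives
\[
\lambda(D)\le \frac{C_d}{u_D(0)}=\frac{C_d}{\mathbb{E}_0\!\left[(W^{(i)}_{\tau_D})^2\right]},
\]
as required. The only technicality is the $L^2$ convergence needed to upgrade the truncated identity to the limit; once $\mathbb{E}_0[\tau_D]<\infty$ is established via \eqref{eq:spectral_torsion}, this is routine.

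For the CSEP specialization, if $D$ solves \eqref{eq:CSEP} for $\mu$ then $W^{(1)}_{\tau_D}=\Real W_{\tau_D}\sim\mu$, which has mean zero, so $\mathbb{E}_0\!\left[(W^{(1)}_{\tau_D})^2\right]=\Var\mu$ and the general bound with $i=1$ and $d=2$ yields $\lambda(D)\le C_2/\Var\mu$. The explicit constant $2.0379$ is then obtained by substituting $d=2$ into Vogt's bound \eqref{eq:VogtBound} (or by invoking the sharper non-explicit improvement from \cite{improved_Vogt}) and numerically evaluating.
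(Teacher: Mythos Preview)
Your proof is correct and follows essentially the same route as the paper: reduce to the spectral torsion bound $\lambda(D)\le C_d/\|u_D\|_\infty\le C_d/\mathbb{E}_0[\tau_D]$ and then compare $\mathbb{E}_0[\tau_D]$ with $\mathbb{E}_0\big[(W^{(i)}_{\tau_D})^2\big]$ via optional stopping for $(W^{(i)}_t)^2-t$. The only minor difference is that the paper passes to the limit using Fatou on the left and monotone convergence on the right, obtaining just the inequality $\mathbb{E}_0\big[(W^{(i)}_{\tau_D})^2\big]\le\mathbb{E}_0[\tau_D]$ (which is all that is needed), whereas you use $L^2$-boundedness of the stopped martingale to get the equality; also, the numerical constant $2.0379$ in the paper is taken from \cite[Table~1]{improved_Vogt} rather than from \eqref{eq:VogtBound} directly.
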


\begin{rem}
Under the added condition that $D$ is convex and if we assume that the \emph{equilateral triangle conjecture} of \cite{Henrot} is true, then the inequality \eqref{eq:CSEP_bound} can be improved to
\[
\lambda(D)\leq \frac{4\pi^2}{27\Var\mu}\approx\frac{1.4622}{\Var\mu}.
\]
\end{rem}

\begin{cor}
If $D$ solves the CSEP for $\mathrm{U}[-1,1]$, then 
\[
\lambda(D)\leq 3\,C_2\leq 6.1136.
\] 
\end{cor}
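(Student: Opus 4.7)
The plan is to derive this corollary as a direct specialization of Theorem \ref{thm:upper_bound} to the case $\mu=\mathrm{U}[-1,1]$.

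First I would compute the variance of the uniform distribution on $[-1,1]$. Since its density is $\frac{1}{2}\mathbf{1}_{[-1,1]}$ and it has zero mean, a standard calculation gives
\[
\Var\mathrm{U}[-1,1]=\int_{-1}^{1}\frac{x^{2}}{2}\,\D{x}=\frac{1}{3}.
\]
This is the only new computation needed.

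Next I would invoke the second conclusion of Theorem \ref{thm:upper_bound}: since $D$ solves the CSEP for $\mathrm{U}[-1,1]$, which is a probability distribution with zero mean and finite variance, the bound \eqref{eq:CSEP_bound} applies and yields
\[
\lambda(D)\leq\frac{C_{2}}{\Var\mathrm{U}[-1,1]}=3\,C_{2}.
\]
Finally, I would substitute the explicit numerical estimate $C_{2}\leq 2.0379$ already recorded in \eqref{eq:CSEP_bound}, which comes from Vogt's spectral upper bound \eqref{eq:VogtBound} applied at $d=2$, to conclude $\lambda(D)\leq 3\,C_{2}\leq 6.1136$.

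There is no real obstacle here; the corollary is essentially a one-line consequence of Theorem \ref{thm:upper_bound} together with the elementary variance computation. The only mild subtlety is keeping track of the rounding in the final decimal estimate, which should come out to $3\times 2.0379=6.1137$ and is reported as $6.1136$ after using a slightly sharper underlying bound on $C_{2}$.
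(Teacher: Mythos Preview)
Your proposal is correct and matches the paper's approach exactly: the corollary is stated without proof in the paper, being an immediate consequence of Theorem~\ref{thm:upper_bound} together with $\Var\mathrm{U}[-1,1]=\tfrac{1}{3}$. Your observation about the last decimal is also apt; the $6.1136$ comes from the more precise underlying bound on $C_2$ from \cite[Table~1]{improved_Vogt} rather than from multiplying the already-rounded $2.0379$ by $3$.
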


Our next result is a lower bound on $\lambda(D)$ in terms of the width of the support of $\mu$ which is sharp as demonstrated by Theorem \ref{thm:uniform_curve} below. When paired with Theorem \ref{thm:upper_bound}, this provides a full answer to Question \ref{q:1} 
\begin{thm}\label{thm:lower_bound}
Suppose $D\subset\mathbb{C}$ solves the CSEP \eqref{eq:CSEP} for a probability distribution $\mu$. If $[\alpha,\beta]$ is the smallest interval containing the support of $\mu$, then
\begin{equation}\label{eq:lower_bound}
\lambda(D)\geq \frac{\pi^2}{2(\beta-\alpha)^2}.
\end{equation} 
\end{thm}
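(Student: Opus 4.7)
The plan is to derive the lower bound via a direct domain comparison, using Theorem \ref{thm:support_theorem} as a black box to confine $D$ to a thin vertical strip, and then appealing to the classical principal Dirichlet eigenvalue of that strip, which was already computed as an example in the introduction.

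First I would observe that since $D$ solves the CSEP \eqref{eq:CSEP} for $\mu$, the orthogonal projection onto the real axis of the harmonic measure of $D$ with pole at $0$ coincides with $\mu$. Consequently, the support of this projected measure equals $\supp(\mu)$, which by hypothesis is contained in $[\alpha,\beta]$. Invoking Theorem \ref{thm:support_theorem}, which by design pins down how the orthogonal projection of a simply connected planar domain relates to the support of its projected harmonic measure, I would conclude that the real part $\Real z$ of every point $z \in D$ lies in $[\alpha,\beta]$. In other words, $D$ is contained in the open strip $S := \{z \in \mathbb{C} : \alpha < \Real z < \beta\}$.

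The argument now concludes by a standard monotonicity observation: the rate $\lambda(D) = -\lim_{t\to\infty}\frac{1}{t}\log\mathbb{P}_0(\tau_D > t)$ is monotone nonincreasing in the domain, so $D \subset S$ gives $\lambda(D) \geq \lambda(S)$. By separation of variables for the Dirichlet Laplacian on the strip, the bottom of the spectrum on $S$ reduces to that on the one-dimensional interval of length $\beta-\alpha$; using the authors' convention that $2\lambda$ is the bottom of the Dirichlet spectrum, this yields $\lambda(S) = \frac{\pi^2}{2(\beta-\alpha)^2}$, matching \eqref{eq:lower_bound} exactly.

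Granted Theorem \ref{thm:support_theorem}, this argument has essentially no obstacle. The genuine difficulty lies entirely in Theorem \ref{thm:support_theorem} itself: one must show that no portion of a simply connected $D$ containing $0$ can extend horizontally beyond the convex hull of the support of the projected harmonic measure, a statement whose proof is deferred to Section \ref{sec:support}. It is worth noting that the sharpness of the bound is delivered separately by the explicit construction of $\mathbb{U}$ in Theorem \ref{thm:uniform_curve}, so no additional work is needed here to certify optimality.
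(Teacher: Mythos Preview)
Your proposal is correct and follows essentially the same route as the paper: invoke Theorem~\ref{thm:support_theorem} (using the CSEP hypothesis $\mathbb{E}_0[\tau_D]<\infty$) to conclude that $\Real D=(\alpha,\beta)$, hence $D$ sits inside the vertical strip of width $\beta-\alpha$, and then finish by domain monotonicity together with the known rate $\frac{\pi^2}{2(\beta-\alpha)^2}$ for the strip. The only cosmetic addition in the paper is an explicit sentence disposing of the trivial case $\beta-\alpha=\infty$.
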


\begin{cor}\label{cor:uniform_curve}
If $D$ solves the CSEP for $\mathrm{U}[-1,1]$, then 
\[
\lambda(D)\geq \frac{\pi^2}{8}\approx 1.2337.
\] 
\end{cor}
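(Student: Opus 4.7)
The corollary is a direct specialization of Theorem \ref{thm:lower_bound}, so the proof is essentially a one-line computation. My plan is simply to identify the smallest closed interval containing the support of $\mathrm{U}[-1,1]$ and then plug the resulting width into the general lower bound.

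First I would observe that the uniform distribution on $[-1,1]$ has support exactly equal to the closed interval $[-1,1]$, so the smallest interval $[\alpha,\beta]$ containing $\supp\mathrm{U}[-1,1]$ has $\alpha=-1$ and $\beta=1$, giving $\beta-\alpha=2$. Then, applying Theorem \ref{thm:lower_bound} to any domain $D\subset\mathbb{C}$ that solves the CSEP for $\mathrm{U}[-1,1]$, we immediately get
\[
\lambda(D)\geq\frac{\pi^2}{2(\beta-\alpha)^2}=\frac{\pi^2}{2\cdot 4}=\frac{\pi^2}{8},
\]
which after numerical evaluation yields the stated $\pi^2/8\approx 1.2337$.

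There is no genuine obstacle here since all the work has been absorbed into Theorem \ref{thm:lower_bound} (whose own proof hinges on the support result, Theorem \ref{thm:support_theorem}). The only thing worth being careful about is the convention that $[\alpha,\beta]$ denotes the \emph{smallest} interval containing $\supp\mu$: since the uniform density is strictly positive on $(-1,1)$ and vanishes outside, the support is indeed $[-1,1]$ and there is no room to shrink the bracketing interval, so the bound cannot be sharpened from this side. Finally, one may note in passing that the lower bound $\pi^2/8$ matches the principal Dirichlet rate of a strip of width $2$, which is consistent with the sharpness statement foreshadowed by Theorem \ref{thm:uniform_curve}.
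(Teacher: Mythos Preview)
Your proposal is correct and matches the paper's approach exactly: the corollary is stated immediately after Theorem~\ref{thm:lower_bound} without a separate proof, since it is just the specialization $\alpha=-1$, $\beta=1$ giving $\lambda(D)\geq\pi^2/8$. Your added remarks about the support being precisely $[-1,1]$ and the connection to the strip rate are accurate but more detail than the paper supplies.
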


The proof of Theorem \ref{thm:lower_bound} relies on Theorem \ref{thm:support_theorem} below which gives a precise relationship between the width of the orthogonal projection of a simply connected planar domain and the width of the support of the orthogonal projection of its harmonic measure. As far as the authors know, this hasn't appeared in the literature before. We point the reader to section \ref{sec:support} where this result is developed in detail.

Our final result partially answers Question \ref{q:2} by exhibiting a domain $\mathbb{U}$ which is a minimal rate solution to the CSEP for $\mathrm{U}[-1,1]$. Moreover, this shows that our general lower bound given in Theorem \ref{thm:lower_bound} is sharp. See Figure \ref{fig:shapes} for a comparison of $\mathbb{U}$ and Gross's solution domain to the CSEP for $\mathrm{U}[-1,1]$. 

\begin{figure}
\centering
\includegraphics[scale=.46]{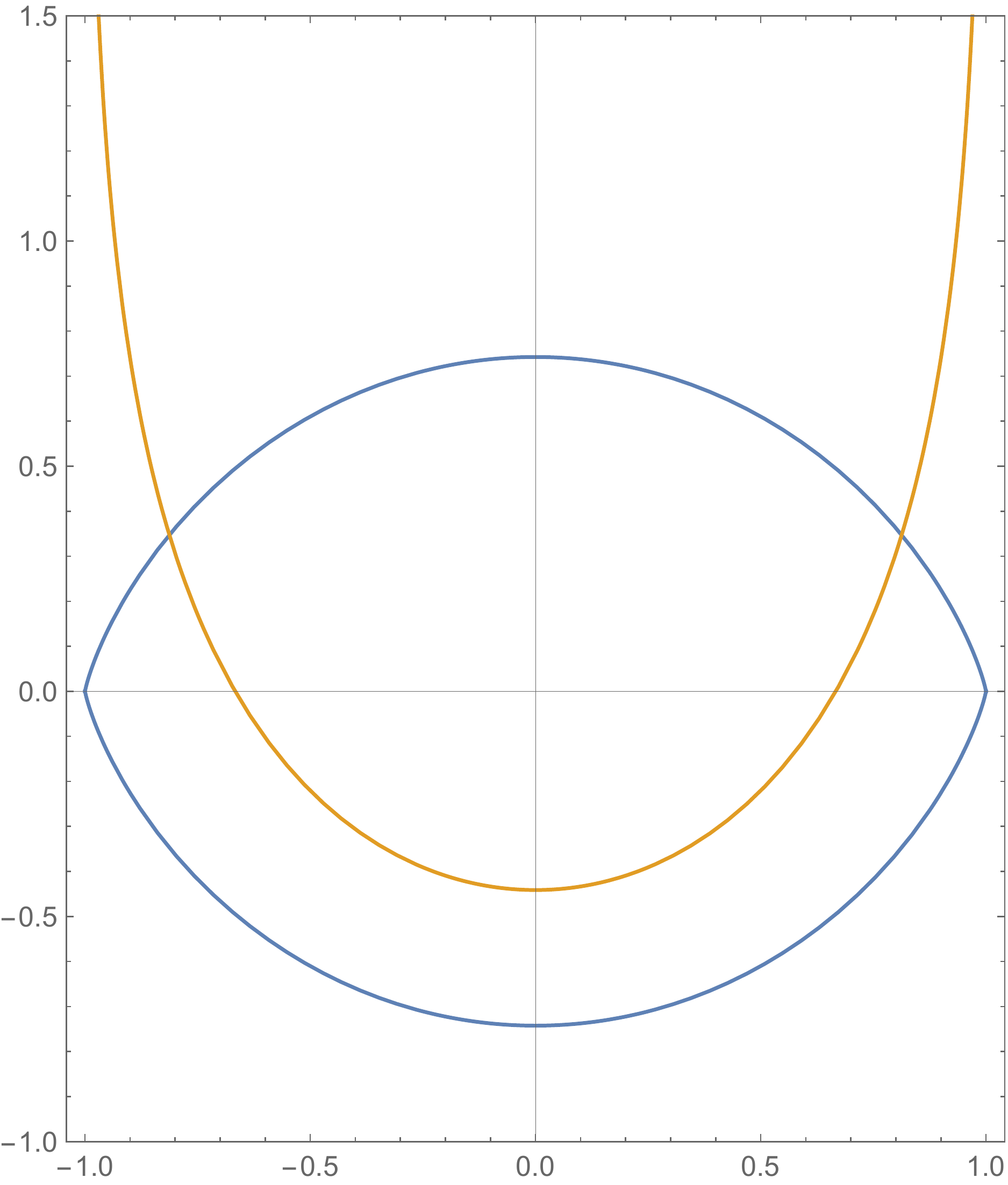}
\caption{$\mathbb{U}$ is the region above the U-shaped graph. Gross's solution to the CSEP for $\mathrm{U}[-1,1]$ is the eye-shaped region bounded by the closed curve.}
\label{fig:shapes}
\end{figure}

\begin{thm}\label{thm:uniform_curve}
Let 
\begin{equation}\label{eq:uniform_curve}
\mathbb{U}=\left\{ z\in\mathbb{C}:|\Real z|<1,\,\Imaginary z>h( \Real z)\right\}
\end{equation}
where 
\[
h(x)=-\frac{2}{\pi}\log\left(2\cos\frac{\pi}{2} x\right),~|x|<1.
\]
Then under $\mathbb{P}_0$ we have
\begin{equation*}
\Real W_{\tau_\mathbb{U}} \sim \mathrm{U}[-1,1].
\end{equation*}
Moreover, $\mathbb{U}$ is a minimal rate solution to the CSEP for $\mathrm{U}[-1,1]$. That is, if $D\subset\mathbb{C}$ is another solution to the CSEP \eqref{eq:CSEP} for $\mathrm{U}[-1,1]$, then $\lambda(\mathbb{U})\leq\lambda(D)$. 
\end{thm}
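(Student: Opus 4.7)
The plan is to prove Theorem \ref{thm:uniform_curve} in two parts. First I would verify that $\mathbb{U}$ solves the CSEP for $\mathrm{U}[-1,1]$ by constructing an explicit conformal bijection $\Phi:\mathbb{H}\to\mathbb{U}$ with $\Phi(i)=0$. A natural candidate, suggested by the ansatz described in the introduction and by the fact that $t\mapsto\frac{2}{\pi}\arctan t$ pushes a standard Cauchy law onto $\mathrm{U}[-1,1]$, is
\[
\Phi(z)=\frac{2i}{\pi}\log\frac{1-iz}{2}.
\]
This is analytic on $\mathbb{H}$ since its only branch singularity $z=-i$ lies outside $\mathbb{H}$. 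A direct calculation using $\cos(\arctan t)=1/\sqrt{1+t^2}$ shows that for $t\in\mathbb{R}$,
\[
\Phi(t)=\frac{2}{\pi}\arctan t+\frac{i}{\pi}\log\frac{1+t^2}{4},
\]
whose imaginary part equals $h(\Real\Phi(t))$, so $\Phi$ maps $\mathbb{R}$ bijectively and monotonically onto the boundary curve of $\mathbb{U}$. After verifying orientation and tracking the behavior of $\Phi$ at $\infty$, the boundary correspondence principle promotes $\Phi$ to a conformal bijection $\mathbb{H}\to\mathbb{U}$.

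With $\Phi$ in hand, conformal invariance of planar Brownian motion gives that $\Real W_{\tau_\mathbb{U}}$ has the same law as $\Real\Phi(T)=\frac{2}{\pi}\arctan T$, where $T$ is the harmonic measure of $\mathbb{H}$ at $i$, i.e.\ the standard Cauchy law. A one-line change of variables confirms that $\frac{2}{\pi}\arctan T\sim\mathrm{U}[-1,1]$. Finiteness of $\mathbb{E}_0[\tau_\mathbb{U}]$ follows from the inclusion $\mathbb{U}\subset\{z:|\Real z|<1\}$, or alternatively from optional stopping applied to $(\Real W_t)^2-t$, which gives $\mathbb{E}_0[\tau_\mathbb{U}]=\Var\mathrm{U}[-1,1]=1/3$.

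For the minimality of the rate, Theorem \ref{thm:lower_bound} applied to $\mathrm{U}[-1,1]$ already yields $\lambda(D)\geq\pi^2/8$ for every CSEP solution $D$, including $\mathbb{U}$ itself. To get the matching inequality $\lambda(\mathbb{U})\leq\pi^2/8$ I would exploit that $h(x)\to\infty$ as $x\to\pm 1$: for each $a>0$ there is a unique $x_a\in(0,1)$ with $h(x_a)=a$, so the rectangle $R_{a,L}=(-x_a,x_a)\times(a,a+L)$ is contained in $\mathbb{U}$ for every $L>0$. Domain monotonicity of the principal Dirichlet eigenvalue gives
\[
\lambda(\mathbb{U})\leq\lambda(R_{a,L})=\frac{\pi^2}{8x_a^2}+\frac{\pi^2}{2L^2},
\]
and sending $L\to\infty$ followed by $a\to\infty$ (whence $x_a\to 1$) yields $\lambda(\mathbb{U})\leq\pi^2/8$. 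Combined with the lower bound, $\lambda(\mathbb{U})=\pi^2/8\leq\lambda(D)$ for any other solution $D$, which is the required minimality.

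The main obstacle is the identification of $\Phi$ as a conformal bijection onto all of $\mathbb{U}$. Since both $\mathbb{H}$ and $\mathbb{U}$ are unbounded and $\partial\mathbb{U}$ stretches to infinity along two vertical asymptotes, one must verify injectivity of $\Phi$ and track its boundary behavior carefully (in particular near $\infty$) so that the boundary correspondence principle applies in the unbounded setting. Once this is secured, the remaining steps are routine changes of variables and appeals to Theorem \ref{thm:lower_bound} combined with domain monotonicity.
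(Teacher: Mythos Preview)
Your proposal is correct and follows essentially the same route as the paper: your map $\Phi$ is precisely the inverse of the paper's explicit conformal map $f(z)=2i\,e^{-\frac{\pi}{2}iz}-i$ from $\mathbb{U}$ onto $\mathbb{H}$, and your rectangle argument for $\lambda(\mathbb{U})\leq\pi^2/8$ matches the paper's Lemma~\ref{lem:uniform_rate}. The only notable difference is that the paper sidesteps the obstacle you flag by proving bijectivity of $f$ directly---injectivity comes from that of $z\mapsto e^z$ on the strip $\{|\Imaginary z|<\pi/2\}$, and surjectivity onto $\mathbb{H}$ is checked by computing the images of a foliation of $\mathbb{U}$ by vertical translates of $\partial\mathbb{U}$---rather than appealing to a boundary correspondence principle in the unbounded setting.
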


\begin{rem}\label{rem:Grim_Reaper}
The boundary of $\mathbb{U}$ is a translation and scaling of $y=-\log\cos x$ hence it belongs to the family of Grim Reaper curves which are the only self-similar translating solutions to the curve shortening flow in the plane, see \cite{Grim_Reaper}.
\end{rem}

\begin{rem}
The simply connected domain $\mathbb{U}$ is a rotation and scaling of a domain studied in \cite[Example 4]{Markowsky2011} where the author points out that its boundary curve has been referred to as the ``catenary of equal resistance". In that paper the expected exit time is computed but no mention is made of the distribution of the real or imaginary parts of the stopped Brownian motion.
\end{rem}

\begin{rem}
The family of scaled domains $a\mathbb{U}$ with $a>0$ gives minimal rate solutions to the CSEP for $\mathrm{U}[-a,a]$.
\end{rem}

\section{Orthogonal Projection of Harmonic Measure}\label{sec:support}

The goal of this section is to prove a relationship between the width of the projection of $D$ onto the real axis and the width of the support of $\Real W_{\tau_D}$. More specifically, suppose $D\subsetneq\mathbb{C}$ is a simply connected domain containing $0$ and let $\Real D$ denote its projection onto the real axis, that is, $\Real D=\{\Real z:z\in D\}$. Since $D$ is both open and connected and the projection map is both open and continuous, we know that $\Real D=(a,b)$ for some $a\in [-\infty,0)$ and $b\in (0,\infty]$.

The boundary $\partial D$ is nonpolar under the assumptions on $D$ so $\tau_D$ is almost surely finite. Define 
\[
\alpha=\sup\left\{x\in\mathbb{R}:\mathbb{P}_0\big(\Real W_{\tau_D}\in[x,\infty)\big)=1\right\}
\]
and
\[
\beta=\inf\left\{x\in\mathbb{R}:\mathbb{P}_0\big(\Real W_{\tau_D}\in(-\infty,x]\big)=1\right\}.
\]
It is clear that $\alpha\leq\beta$ by definition. Moreover, since $W_{\tau_D}\in \partial D$ almost surely, it follows that $[\alpha,\beta]\subset [a,b]$. The following theorem gives conditions under which the reverse inclusion also holds. While this may seem intuitively obvious at first glance, pathological examples of simply connected domains abound \cite{Pommerenke} so it is worthwhile writing a careful proof. Besides, Remark \ref{rem:alpha_beta} and Figure \ref{fig:alpha_beta} show that $[\alpha,\beta]\supset [a,b]$ doesn't hold in complete generality even for nice domains.

\begin{thm}\label{thm:support_theorem}
Suppose $D\subsetneq\mathbb{C}$ is a simply connected domain containing $0$ and let $a,\alpha,b,\beta$ be defined as above. Then $a>-\infty$ implies $\alpha=a$ and $b<\infty$ implies $\beta=b$. Moreover, if $\mathbb{E}_0[\tau_D]<\infty$, then $\alpha=a$ and $\beta=b$ regardless of whether $a$ and $b$ are finite.
\end{thm}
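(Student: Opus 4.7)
The approach is to transfer the problem to the unit disk via the Riemann mapping theorem. Let $\phi:\mathbb{D}\to D$ be a conformal map with $\phi(0)=0$ and set $u=\Real\phi$, a harmonic function on $\mathbb{D}$ whose image is $(a,b)$, so that $\inf_\mathbb{D} u=a$ and $\sup_\mathbb{D} u=b$. Conformal invariance of Brownian motion together with the a.e.\ existence of radial limits $u^*(e^{i\theta}):=\lim_{r\to 1^-}u(re^{i\theta})$ give the distributional identity $\Real W_{\tau_D}\stackrel{d}{=}u^*(e^{i\Theta})$ for $\Theta$ uniform on $[0,2\pi)$, hence $\alpha=\operatorname{ess\,inf} u^*$ and $\beta=\operatorname{ess\,sup} u^*$. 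All three assertions of the theorem then reduce to comparing these essential extrema with the extrema of $u$ on $\mathbb{D}$.

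For the first assertion I would assume $a>-\infty$ and work with $v:=u-a$, a strictly positive harmonic function on $\mathbb{D}$ with $\inf_\mathbb{D} v=0$. The Herglotz representation expresses $v=P[\mu]$ for some finite positive Borel measure $\mu$ on $\partial\mathbb{D}$, whose Lebesgue decomposition reads $d\mu=v^*\,d\theta/(2\pi)+d\mu_s$ with absolutely continuous density equal to the a.e.\ radial limit $v^*=u^*-a$. Discarding the nonnegative Poisson integral of $d\mu_s$ yields
\[
v(z)\;\geq\;\int_0^{2\pi} P(z,\theta)\bigl(u^*(e^{i\theta})-a\bigr)\,\frac{d\theta}{2\pi}\;\geq\;\alpha-a
\]
for every $z\in\mathbb{D}$, so $a=\inf_\mathbb{D} u\geq\alpha$, which combined with the trivial $\alpha\geq a$ gives $\alpha=a$. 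The implication $b<\infty\Rightarrow\beta=b$ is then obtained by applying the same reasoning to the positive harmonic function $b-u$.

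For the third assertion, only the case $a=-\infty$ (resp.\ $b=\infty$) adds content beyond the first two, so assume $a=-\infty$ and $\mathbb{E}_0[\tau_D]<\infty$. The plan is to promote $\phi$ into the Hardy space $H^2(\mathbb{D})$ so that $u$ becomes a genuine Poisson integral and no singular measure can appear. Writing $\phi(z)=\sum_{n\geq 1}a_n z^n$ and invoking the standard Green's function representation $\mathbb{E}_0[\tau_D]=\int_\mathbb{D} G_\mathbb{D}(0,w)\,|\phi'(w)|^2\,dA(w)$, a direct polar-coordinate calculation produces the identity $\mathbb{E}_0[\tau_D]=\tfrac{1}{2}\sum_{n\geq 1}|a_n|^2=\tfrac{1}{2}\|\phi\|_{H^2}^2$, so the integrability hypothesis is exactly $\phi\in H^2(\mathbb{D})$. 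Classical Hardy space theory then gives $\phi=P[\phi^*]$ and, taking real parts, $u=P[u^*]$ with $u^*\in L^2$. If $\alpha>-\infty$, then $u^*\geq\alpha$ a.e.\ forces $u(z)\geq\alpha$ for every $z\in\mathbb{D}$, contradicting $\inf_\mathbb{D} u=-\infty$. The analogous argument applied to $-u$ handles $\beta=b$.

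I expect the most delicate step to be the setup itself, namely the clean justification of $\Real W_{\tau_D}\stackrel{d}{=}u^*(e^{i\Theta})$ and the resulting identifications $\alpha=\operatorname{ess\,inf}u^*$, $\beta=\operatorname{ess\,sup}u^*$. These rest on interpreting the harmonic measure at the boundary of a general simply connected domain through the radial limits of the Riemann map, a classical but subtle matter since $\phi$ need not extend continuously to $\partial\mathbb{D}$ (as foreshadowed by the pathological examples referenced just before the theorem statement). Once this framework is in place, the remaining work is a routine application of the Herglotz representation in the first two cases and of the coefficient sum identity $\mathbb{E}_0[\tau_D]=\tfrac{1}{2}\|\phi\|_{H^2}^2$ in the third.
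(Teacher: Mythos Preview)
Your argument is correct and takes a genuinely different route from the paper's. The paper works directly in $D$: it first proves a lemma (via Beurling's projection theorem and an explicit half-disk harmonic measure computation) that Brownian motion exits through any neighbourhood of any boundary point with positive probability, and uses this to show that whenever $b<\infty$ there are boundary points with real part arbitrarily close to $b$, forcing $\beta=b$. For the case $b=\infty$ with $\mathbb{E}_0[\tau_D]<\infty$, the paper argues by contradiction using Burkholder's result that $\mathbb{E}_x[\tau_D]<\infty$ for every $x\in D$: if $\beta<\infty$ one picks $x\in D$ with $\Real x>\beta$, notes that the same lemma forces $\Real\xi\le\beta$ for all $\xi\in\partial D$, and concludes $\mathbb{E}_x[\tau_D]$ dominates the infinite expected hitting time of a half-line.

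By contrast, you transplant everything to $\mathbb{D}$ and argue function-theoretically: the Herglotz representation handles the bounded-side case with no geometric input, and the identity $\mathbb{E}_0[\tau_D]=\tfrac12\|\phi\|_{H^2}^2$ converts the moment hypothesis into $\phi\in H^2$, eliminating the singular part of the boundary measure outright. Your approach is slicker once the boundary-value framework is granted, and the $H^2$ characterisation is a nice structural observation the paper does not make; the paper's approach is more self-contained probabilistically and yields the auxiliary lemma (positivity of harmonic measure on every boundary neighbourhood) as a result of independent interest. You are right that your most delicate step is justifying $\Real W_{\tau_D}\stackrel{d}{=}u^*(e^{i\Theta})$ for a general simply connected $D$; this is standard (univalent maps lie in $H^p$ for $p<1/2$, hence have a.e.\ finite radial limits, and conformal invariance of harmonic measure does the rest) but deserves a sentence of justification, whereas the paper sidesteps this by never leaving $D$.
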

\begin{rem}\label{rem:alpha_beta}
Considering simply connected domains such as the interior of $\mathbb{U}^c$ (translated so that it contains $0$) show that an additional condition is needed for $\alpha=a$ and $\beta=b$ to hold when $a$ and $b$ are infinite, see Figure \ref{fig:alpha_beta}. The sufficient condition $\mathbb{E}_0[\tau_D]<\infty$ is one that happens to fit well with the CSEP. 
\end{rem}

\begin{figure}
\centering
\begin{minipage}{0.48\textwidth}
\centering
\includegraphics[width=\textwidth]{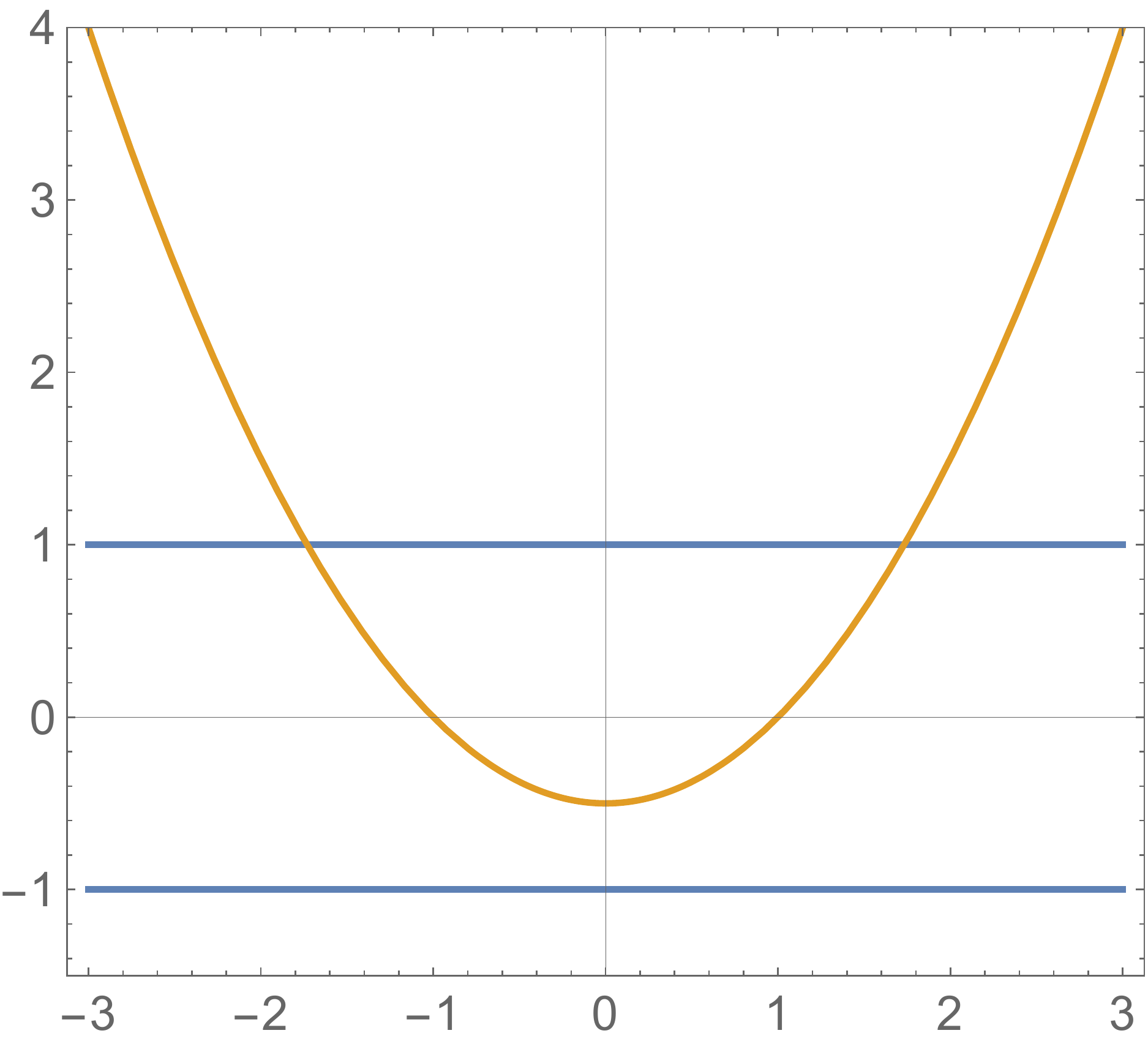}
\caption{The infinite strip and the region above the parabola both embed the hyperbolic secant density.}
\label{fig:parabola}
\end{minipage}
\hfill
\begin{minipage}{0.48\textwidth}
\centering
\includegraphics[width=\textwidth]{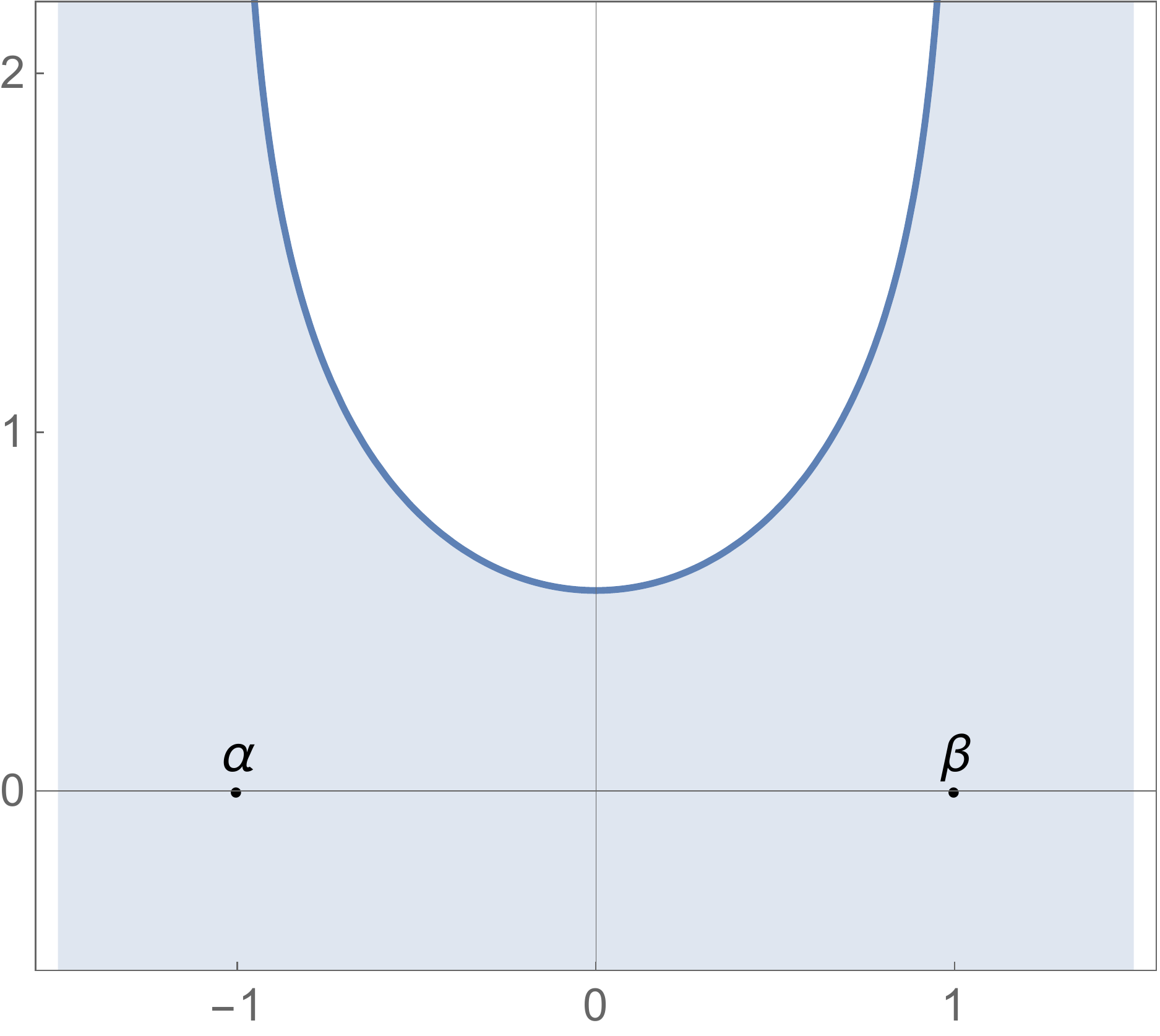}
\caption{Considering the interior of $\mathbb{U}^c+i$ shows that $\alpha=a$ and $\beta=b$ may not hold when $a$ and $b$ are infinite.}
\label{fig:alpha_beta}
\end{minipage}
\end{figure}

The proof of Theorem \ref{thm:support_theorem}, which we postpone until the end of the section, relies on the next lemma which states that there is positive probability of $W$ exiting $D$ through any neighborhood of any boundary point. In what follows, we denote by $B_r(x)$ and $\overline{B}_r(x)$ the open and closed disk, respectively, of radius $r>0$ centered at $x$.
\begin{lem}\label{lem:support_lemma}
Suppose $D\subset\mathbb{C}$ is a simply connected domain and let $x\in D$ and $\xi\in\partial D$. Then for all $\epsilon>0$ we have 
\[
\mathbb{P}_x\left(W_{\tau_D}\in \overline{B}_\epsilon(\xi)\cap \partial D\right)>0.
\]
\end{lem}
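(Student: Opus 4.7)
The plan is to split the proof into two parts via the strong Markov property, namely (i) showing that the motion has positive probability of reaching a small disk around some $y\in D$ near $\xi$ before time $\tau_D$, and (ii) showing that starting at such a $y$ the motion exits $D$ in $\overline{B}_\epsilon(\xi)$ with positive probability. For (i), first pick $y\in D$ with $|y-\xi|<\epsilon/2$, which exists because $\xi\in\partial D$. Since $D$ is open and connected in $\mathbb{C}$ it is path-connected, so there is a continuous path $\gamma$ from $x$ to $y$ in $D$. The image $\gamma([0,1])$ is a compact subset of $D$, hence at positive distance from $\partial D$, and a standard chain-of-disks argument combined with iterated applications of the strong Markov property yields
\[
\mathbb{P}_x\big(T_\eta<\tau_D\big)>0,
\]
where $\eta>0$ is chosen so small that $\overline{B}_\eta(y)\subset D$ and $T_\eta=\inf\{t\geq 0:W_t\in\overline{B}_\eta(y)\}$.

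Writing $h(z):=\mathbb{P}_z(W_{\tau_D}\in\overline{B}_\epsilon(\xi)\cap\partial D)$, the function $h$ is bounded and harmonic on $D$, hence continuous. Therefore if $h(y)>0$ then, shrinking $\eta$ if necessary, $h\geq h(y)/2$ on $\overline{B}_\eta(y)$, and the strong Markov property at $T_\eta$ gives
\[
\mathbb{P}_x\big(W_{\tau_D}\in\overline{B}_\epsilon(\xi)\cap\partial D\big)\geq \mathbb{E}_x\big[\mathbf{1}_{\{T_\eta<\tau_D\}}\,h(W_{T_\eta})\big]\geq \tfrac{h(y)}{2}\,\mathbb{P}_x(T_\eta<\tau_D)>0.
\]
Thus the whole lemma reduces to establishing $h(y)>0$.

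The hard part will be this last step, which is where simple connectivity of $D$ is used. Since $D\subsetneq\mathbb{C}$ is simply connected, the complement $\overline{\mathbb{C}}\setminus D$ is a connected closed subset of the Riemann sphere containing both $\xi$ and $\infty$. A standard topological ``boundary bumping'' argument then produces a continuum $K$ contained in $(\mathbb{C}\setminus D)\cap\overline{B}_\epsilon(\xi)$ that joins $\xi$ to the circle $\partial B_\epsilon(\xi)$. Such a nondegenerate planar continuum has positive logarithmic capacity and is therefore nonpolar for planar Brownian motion, so the classical hitting criterion ensures that the motion started at $y\in B_\epsilon(\xi)\setminus K$ hits $K$ before exiting $B_\epsilon(\xi)$ with positive probability; on that event $W_{\tau_D}\in K\subset\overline{B}_\epsilon(\xi)\cap\partial D$, giving $h(y)>0$. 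The main technical input is thus the classical fact that nondegenerate continua in the plane are nonpolar for Brownian motion, which I would simply cite rather than reprove; this is also the step that would fail without simple connectivity, since in general $D^c$ could cluster at $\xi$ only along a polar set.
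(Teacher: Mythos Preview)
Your argument is essentially correct and follows the same skeleton as the paper's proof: reduce from $x$ to a nearby point $y$ (you do this via a chain-of-disks strong Markov argument, the paper invokes Harnack---these are equivalent), then use simple connectivity to produce a continuum $K\subset D^c\cap\overline{B}_\epsilon(\xi)$ joining $\xi$ to $\partial B_\epsilon(\xi)$, and finally show that Brownian motion from $y$ hits $K$ before leaving $B_\epsilon(\xi)$ with positive probability. The difference lies in this last step. The paper makes it quantitative: it normalizes by a rotation, applies Beurling's projection theorem to replace $K$ by its circular projection $[0,\epsilon]$, and then uses an explicit conformal map of the slit disk onto the upper half-disk to compute a concrete lower bound. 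Your route is softer and more conceptual: a nondegenerate continuum is nonpolar, hence has positive harmonic measure from any interior point of $B_\epsilon(\xi)\setminus K$. This avoids Beurling and the explicit computation at the cost of giving no estimate; for the lemma as stated that is perfectly adequate, and arguably cleaner.

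Two small corrections are worth making. First, on the event that $W$ hits $K$ before exiting $B_\epsilon(\xi)$ you only get $\tau_D\leq T_K<\tau_{B_\epsilon(\xi)}$, so $W_{\tau_D}\in B_\epsilon(\xi)\cap\partial D$; the stronger claim $W_{\tau_D}\in K$ need not hold (the path may hit $\partial D$ elsewhere inside the disk first), and in any case $K\subset D^c$, not $K\subset\partial D$. Second, your ``classical hitting criterion'' deserves one more sentence: nonpolarity of $K$ alone is not quite enough, since one must check that the component $\Omega_0$ of $B_\epsilon(\xi)\setminus K$ containing $y$ actually has a nonpolar piece of $K$ on its boundary. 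This follows because $\partial\Omega_0\cap B_\epsilon(\xi)\subset K$ and $\partial\Omega_0\cap B_\epsilon(\xi)\neq\emptyset$ (otherwise $\Omega_0$ would be clopen in $B_\epsilon(\xi)$, forcing $\Omega_0=B_\epsilon(\xi)$, which is impossible since $\xi\in K$); every point of a nondegenerate continuum is regular, so $u(z)=\mathbb{P}_z(T_K<\tau_{B_\epsilon(\xi)})$ tends to $1$ at such a boundary point and hence cannot vanish identically on $\Omega_0$.
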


\begin{proof}
Fix $\epsilon>0$ and pick $y\in B_\epsilon(\xi)\cap D$. Since $D$ is a domain, $x$ and $y$ can be connected by a polygonal path in $D$. This path is a compact subset of $D$ so it has a positive distance from $\partial D$. Hence the Harnack inequality implies
\begin{equation}\label{eq:Harnack}
\mathbb{P}_x\left(W_{\tau_D}\in \overline{B}_\epsilon(\xi)\cap \partial D\right)>0~\text{ iff }~\mathbb{P}_y\left(W_{\tau_D}\in \overline{B}_\epsilon(\xi)\cap \partial D\right)>0
\end{equation}
so we can focus on proving that the latter inequality holds. Since $W$ has continuous paths, it will hit $\partial D$ before it hits the interior of $D^c$, hence the latter probability appearing in \eqref{eq:Harnack} is equal to
\begin{equation}\label{eq:complement}
\mathbb{P}_y\left(W_{\tau_D}\in \overline{B}_\epsilon(\xi)\cap D^c\right).
\end{equation}
Next, we can bound \eqref{eq:complement} from below by 
\begin{equation}\label{eq:truncate}
\mathbb{P}_y\left(W_{\tau_{B_\epsilon(\xi)\cap D}}\in \overline{B}_\epsilon(\xi)\cap D^c\right)
\end{equation}
since \eqref{eq:truncate} excludes paths that leave $B_\epsilon(\xi)$ and return to hit $\overline{B}_\epsilon(\xi)\cap D^c$. By letting $K=\overline{B}_\epsilon(\xi)\cap D^c$, we can write \eqref{eq:truncate} as 
\[
\mathbb{P}_y\left(W_{\tau_{B_\epsilon(\xi)\setminus K}}\in K\right).
\]

Define the conformal transformation $f(w)=-\frac{|y-\xi|}{y-\xi}(w-\xi)$ which maps $\xi$ to $0$ and $y$ to $-|y-\xi|\in (-\epsilon,0)$. For a set $E\subset \mathbb{C}$, let $E^*$ denote the circular projection of $E$. That is, $E^*=\{|w|:w\in E\}$. Now we can use conformal invariance of Brownian motion and the version of Beurling's projection theorem from \cite[Theorem 1]{Oksendal} to get
\begin{align*}
\mathbb{P}_y\left(W_{\tau_{B_\epsilon(\xi)\setminus K}}\in K\right)&=\mathbb{P}_{f(y)}\left(W_{\tau_{B_\epsilon(0)\setminus f(K)}}\in f(K)\right)\\
&\geq \mathbb{P}_{f(y)}\left(W_{\tau_{B_\epsilon(0)\setminus f(K)^*}}\in f(K)^*\right).
\end{align*}

In order to produce a meaningful estimate, $f(K)^*$ must contain a proper interval. We claim $f(K)^*=[0,\epsilon]$. To see that this is true, consider the connected component of $K$ that contains $\xi$, call it $E$. Clearly $E$ contains some point $z$ such that $|z-\xi|=\epsilon$, for otherwise $E$ would be a bounded connected component of $D^c$ which contradicts $D$ being simply connected. This implies both $0$ and $\epsilon$ are elements of $f(E)^*$. Since $E$ is connected and both $f$ and circular projection are continuous, we know that $f(E)^*$ is connected. Hence 
\[
[0,\epsilon]\supset f(K)^*\supset f(E)^*\supset [0,\epsilon]
\]
and the claim follows.

Finally, we can use $w\mapsto \sqrt{w/\epsilon}$ to conformally map $B_\epsilon(0)\setminus [0,\epsilon]$ onto the upper half-disk $\mathbb{D}\cap\mathbb{H}$, thereby sending $f(y)$ to $\sqrt{f(y)/\epsilon}$ and the boundary set $[0,\epsilon]$ to $[-1,1]$. Using the above inequalities along with the explicit formula for the harmonic measure of $\mathbb{D}\cap\mathbb{H}$ \cite[Table 4.1]{Ransford} while noting that $\sqrt{f(y)/\epsilon}$ is purely imaginary, we can write
\begin{align*}
\mathbb{P}_y\left(W_{\tau_D}\in \overline{B}_\epsilon(\xi)\cap \partial D\right)&\geq 1-\frac{2}{\pi}\arg\left(\frac{1+\sqrt{f(y)/\epsilon}}{1-\sqrt{f(y)/\epsilon}}\right)\\
&=1-\frac{2}{\pi}\arctan\left(\frac{2\sqrt{|f(y)|/\epsilon}}{1-|f(y)|/\epsilon}\right)\\
&>0
\end{align*}
where the last inequality follows from $|f(y)|=|y-\xi|<\epsilon$. In conjunction with \eqref{eq:Harnack}, this proves the lemma. 
\end{proof}

With Lemma \ref{lem:support_lemma} in hand, we can now give a proof of Theorem \ref{thm:support_theorem}.
\begin{proof}[Proof of Theorem \ref{thm:support_theorem}]
Suppose $b<\infty$ and let $\epsilon>0$. Then there exists $x\in D$ such that $|\Real x-b|<\epsilon$. Hence $B_\epsilon(x)\cap D^c$ is nonempty, for otherwise we could increase $b$. It follows that $B_\epsilon(x)$ must contain some $\xi\in\partial D$ and that $|\Real\xi-b|<2\epsilon$. 

By Lemma \ref{lem:support_lemma}, we know that $\mathbb{P}_0\left(W_{\tau_D}\in \overline{B}_\epsilon(\xi)\cap \partial D\right)>0$, from which it follows that $\mathbb{P}_0\left(\Real W_{\tau_D}> b-3\epsilon\right)>0$. Hence $\mathbb{P}_0\left(\Real W_{\tau_D}\in (-\infty, b-3\epsilon]\right)<1$. This implies that $b-3\epsilon<\beta\leq b$ so we can conclude that $\beta=b$. The proof that $\alpha=a$ follows similarly.

Now suppose that $\mathbb{E}_0[\tau_D]<\infty$. If $b<\infty$, we have already shown that $\beta=b$, so assume $b=\infty$. We will show that $\beta<\infty$ leads to a contradiction. 

A result of Burkholder \cite[Equation 3.13]{Burkholder} shows that 
\begin{equation}\label{eq:Burkholder}
\mathbb{E}_0[\tau_D]<\infty~\text{ implies }~\mathbb{E}_x[\tau_D]<\infty~\text{ for all }~x\in D.
\end{equation}
By hypothesis, there exists $x\in D$ such that $\Real x>\beta$. Note that $\Real \xi\leq \beta$ for all $\xi\in\partial D$, for otherwise we could use Lemma \ref{lem:support_lemma} to increase $\beta$. Since $W_{\tau_D}\in\partial D$ almost surely, this implies 
\[
\mathbb{E}_x\left[\tau_D\right]\geq\mathbb{E}_x\big[\inf\{t\geq 0:\Real W_t\leq\beta\}\big]=\infty.
\]
In light of \eqref{eq:Burkholder}, this leads to the desired contradiction. The proof that $\alpha=a$ follows similarly.

\end{proof}

\section{Proofs of Main Results}\label{sec:Proofs}

\subsection{Proof of Theorem \ref{thm:upper_bound}}

\begin{proof}[Proof of Theorem \ref{thm:upper_bound}]
If $\lambda(D)=0$ there is nothing to prove, so assume that $\lambda(D)>0$. In this case we can use \eqref{eq:best_constant} to write 
\begin{align}\label{eq:Vogt}
\lambda(D)&\leq C_d\, \|u_D\|_\infty^{-1}\nonumber \\
&\leq C_d\,\mathbb{E}_0\left[\tau_D\right]^{-1}.
\end{align}
Since the components of $W$ are all independent, each $W^{(i)}$ is a Brownian motion in the natural filtration of $W$, with respect to which $\tau_D$ is a stopping time. In particular, optional stopping applied to the martingale $(W^{(i)}_t)^2-t$ implies that
\[
\mathbb{E}_0\left[\left(W^{(i)}_{\tau_D\wedge n}\right)^2\right]=\mathbb{E}_0\left[\tau_D\wedge n\right]
\]
for each $1\leq i\leq d$ and $n\in\mathbb{N}$. Letting $n\to\infty$ in the above equality while using Fatou's lemma on the left-hand side and monotone convergence on the right-hand side leads to 
\begin{equation}\label{eq:Davis}
\mathbb{E}_0\left[\left(W^{(i)}_{\tau_D}\right)^2\right]\leq\mathbb{E}_0\left[\tau_D\right]
\end{equation}
for each $1\leq i\leq d$. Combining \eqref{eq:Vogt} and \eqref{eq:Davis} proves the theorem. The numerical estimates follow from \cite[Table 1]{improved_Vogt}.
\end{proof}

\subsection{Proof of Theorem \ref{thm:lower_bound}}

\begin{proof}[Proof of Theorem \ref{thm:lower_bound}]
If $\beta-\alpha=\infty$, then \eqref{eq:lower_bound} gives the trivial lower bound of $0$ so there is nothing to prove in this case. Hence we can assume that $\alpha>-\infty$ and $\beta<\infty$. Since $D$ is a solution domain to the CSEP \eqref{eq:CSEP}, we know that $\mathbb{E}_0\left[\tau_D\right]<\infty$. By Theorem \ref{thm:support_theorem}, this implies that $\{\Real z:z\in D\}=(\alpha,\beta)$. Hence $D$ is contained in the infinite strip $\mathbb{S}_{\alpha,\beta}=\{z\in\mathbb{C}:\alpha<\Real z<\beta\}$. Since $\lambda(\mathbb{S}_{\alpha,\beta})=\frac{\pi^2}{2(\beta-\alpha)^2}$, the result follows by domain monotonicity of Dirichlet Laplacian eigenvalues \cite[Lemma 3.1.1]{Sznitman}.
\end{proof}

\subsection{Proof of Theorem \ref{thm:uniform_curve}}\label{sec:uniform_curve}

The first step in proving Theorem \ref{thm:uniform_curve} is to calculate the rate of $\mathbb{U}$. This will be essential in establishing that it is indeed a minimal rate solution to the CSEP for $\mathrm{U}[-1,1]$. We do this in the following lemma.
\begin{lem}\label{lem:uniform_rate}
\[
\lambda(\mathbb{U})=\frac{\pi^2}{8}
\]
\end{lem}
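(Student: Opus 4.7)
The plan is to establish the matching inequalities $\lambda(\mathbb{U}) \geq \pi^2/8$ and $\lambda(\mathbb{U}) \leq \pi^2/8$ by exploiting domain monotonicity of the rate (equivalently, of the bottom of the Dirichlet Laplacian spectrum) to sandwich $\mathbb{U}$ between an infinite strip from outside and a family of half-strips from inside.

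For the lower bound, I would simply note that $\mathbb{U} \subset \{z\in\mathbb{C} : |\Real z| < 1\}$, the infinite strip of width $2$ whose rate the introduction already identifies as $\pi^2/8$. Domain monotonicity of the rate immediately gives $\lambda(\mathbb{U}) \geq \pi^2/8$. This is essentially the mechanism that underlies Theorem \ref{thm:lower_bound} but can be invoked directly here without any of the machinery from Section \ref{sec:support}.

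For the upper bound, I would approximate $\mathbb{U}$ from inside by half-strips. Since $h'(x) = \tan(\pi x/2)$, the boundary curve $h$ is symmetric, strictly decreasing on $(-1,0)$ and strictly increasing on $(0,1)$, so $\max_{|x|\leq 1-\epsilon} h(x) = h(1-\epsilon)$ for each $\epsilon \in (0,1)$. Hence the half-strip
\[
HS_\epsilon = \left(-(1-\epsilon),\,1-\epsilon\right) \times \left(h(1-\epsilon),\,\infty\right)
\]
is contained in $\mathbb{U}$. A standard separation-of-variables calculation shows that the rate of a half-strip of width $w$ equals $\pi^2/(2w^2)$, the same as the rate of the full strip of that width, so $\lambda(HS_\epsilon) = \pi^2/(8(1-\epsilon)^2)$. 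Domain monotonicity then gives $\lambda(\mathbb{U}) \leq \lambda(HS_\epsilon)$, and sending $\epsilon \to 0$ yields $\lambda(\mathbb{U}) \leq \pi^2/8$.

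The only technical point requiring care is that the bottom of the Dirichlet spectrum of an unbounded domain such as $\mathbb{U}$ or $HS_\epsilon$ is generally the infimum of a continuous spectrum rather than a genuine eigenvalue, so domain monotonicity has to be justified at that level. This is clean via the Rayleigh quotient characterization $2\lambda(D) = \inf\{\|\nabla\phi\|_2^2/\|\phi\|_2^2 : \phi \in H_0^1(D),~\phi \not\equiv 0\}$, since extension by zero sends any trial function in a subdomain to a trial function in the larger domain. If a fully hands-on alternative is desired for the upper bound, one can instead construct explicit product trial functions $\phi_{\epsilon,R}(x,y) = \cos(\pi x/(2(1-\epsilon)))\,\psi_R(y)$ supported in $HS_\epsilon$ with $\psi_R$ a smooth bump of increasing support $R$ in $(h(1-\epsilon),\infty)$; separation of variables makes the Rayleigh quotient tend to $\pi^2/(4(1-\epsilon)^2)$ as $R\to\infty$, giving the same bound after $\epsilon\to 0$.
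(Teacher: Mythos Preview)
Your proof is correct and follows the same sandwich strategy as the paper: the lower bound is identical, and the upper bound uses domain monotonicity with an inner approximation whose rate tends to $\pi^2/8$. The only difference is the choice of inner domain. The paper uses finite rectangles $R_n=\{|\Real z|<1-\tfrac{1}{n},\ h(1-\tfrac{1}{n})<\Imaginary z<h(1-\tfrac{1}{n})+n\}$, for which separation of variables gives the genuine principal eigenvalue $\lambda(R_n)=\tfrac{\pi^2}{2}\big(\tfrac{1}{4(1-1/n)^2}+\tfrac{1}{n^2}\big)$, and then lets $n\to\infty$. Your half-strips $HS_\epsilon$ work equally well but, as you correctly note, require invoking the variational characterization because the infimum of the spectrum is not attained; the paper's rectangles sidestep that subtlety entirely since they are bounded. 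Your alternative with explicit product trial functions $\cos(\pi x/(2(1-\epsilon)))\psi_R(y)$ is in effect the same computation the paper does, just phrased at the level of Rayleigh quotients rather than eigenvalues.
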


\begin{proof}
Since $\mathbb{U}$ is contained in the infinite strip $\{z\in\mathbb{C}:|\Real z|<1\}$, it follows from domain monotonicity of Dirichlet Laplacian eigenvalues that $\lambda(\mathbb{U})\geq \frac{\pi^2}{8}$. To get an upper bound, notice from \eqref{eq:uniform_curve} that the rectangles 
\[
R_n:=\left\{z\in\mathbb{C}:|\Real z|<1-\frac{1}{n},\,h\left(1-\frac{1}{n}\right)<\Imaginary z<h\left(1-\frac{1}{n}\right)+n\right\},~n\geq 2
\]
are all contained in $\mathbb{U}$ and have width $2(1-\frac{1}{n})$ and height $n$. Separation of variables can be used to calculate $\lambda(R_n)$ explicitly, which, along with domain monotonicity, allows us to write 
\[
\lambda(R_n)=\frac{\pi^2}{2}\left(\frac{1}{4\left(1-\frac{1}{n}\right)^2}+\frac{1}{n^2}\right)\geq \lambda(\mathbb{U}),~n\geq 2.
\]
Letting $n\to\infty$ in the above inequality completes the proof.
\end{proof}

Now that we know $\lambda(\mathbb{U})$, it remains to show that $\mathbb{U}$ is actually a solution to the CSEP for $\mathrm{U}[-1,1]$. The finite width of $\mathbb{U}$ implies $\mathbb{E}_0[\tau_\mathbb{U}]<\infty$ and we can show that $\Real W_{\tau_\mathbb{U}}\sim\mathrm{U}[-1,1]$ by way of an explicit conformal map from $\mathbb{U}$ onto the upper half-plane $\mathbb{H}$ while exploiting conformal invariance of Brownian motion.
\begin{proof}[Proof of Theorem \ref{thm:uniform_curve}]
Consider the holomorphic function 
\[
f(z)=2i\,e^{-\frac{\pi}{2}iz}-i.
\]
Since $z\mapsto e^z$ is injective on the infinite strip $\{z\in\mathbb{C}:|\Imaginary z|<\frac{\pi}{2}\}$ and since $\mathbb{U}$ is contained in the infinite strip $\{z\in\mathbb{C}:|\Real z|<1\}$, it follows that $f$ is injective on $\mathbb{U}$. 

From \eqref{eq:uniform_curve}, we know that $\partial\mathbb{U}=\left\{x+i\,h(x):x\in (-1,1)\right\}$. It will be convenient to foliate $\mathbb{U}$ by vertical translates of $\partial\mathbb{U}$. More specifically, for each $y\geq 0$, define $h_y=\left\{x+i(h(x)+y):x\in (-1,1)\right\}$. Then $\partial\mathbb{U}=h_0$ and $\mathbb{U}=\bigcup_{y>0} h_y$. Since $f$ is injective on $\mathbb{U}$, we can foliate $f(\mathbb{U})$ by the images of $h_y$ under $f$. Towards this end, we compute
\begin{align}
f(h_y)&=\left\{2i \exp\left(-\frac{\pi}{2}i x-\log\left(2\cos\frac{\pi}{2} x\right)+\frac{\pi}{2}y\right)-i:x\in(-1,1)\right\}\nonumber \\
&=\left\{2i e^{\frac{\pi}{2}y}\frac{\cos\frac{\pi}{2}x-i\sin\frac{\pi}{2}x}{2\cos\frac{\pi}{2}x}-i:x\in(-1,1)\right\}\nonumber \\
&=\left\{e^{\frac{\pi}{2}y}\tan\frac{\pi}{2}x+i\left(e^{\frac{\pi}{2}y}-1\right)-i:x\in(-1,1)\right\}\label{eq:boundary} \\
&=\left\{z\in\mathbb{C}:\Imaginary z=e^{\frac{\pi}{2}y}-1\right\}.\nonumber
\end{align}
This shows that $f(\mathbb{U})=\bigcup_{y>0} f(h_y)=\mathbb{H}$, hence $f$ maps $\mathbb{U}$ conformally onto $\mathbb{H}$.

Now for $0\leq x<1$, the conformal invariance of Brownian motion and the fact that $\partial \mathbb{U}$ is given by the graph of the function $h$ imply that 
\begin{align*}
\mathbb{P}_0\left(0\leq\Real W_{\tau_\mathbb{U}}\leq x\right)&=\mathbb{P}_{f(0)}\left(\Real f\big(i\,h(0)\big)\leq\Real W_{\tau_\mathbb{H}}\leq \Real f\big(x+i\,h(x)\big)\right)\\
&=\mathbb{P}_i\left(0\leq\Real W_{\tau_\mathbb{H}}\leq \tan\frac{\pi}{2}x\right)\\
&=\frac{1}{2}x.
\end{align*}
In the middle equation we used \eqref{eq:boundary} with $y=0$ and in the last equation we used the fact that the law of $\Real W_{\tau_\mathbb{H}}$ under $\mathbb{P}_i$ is just the harmonic measure of the upper half-plane with pole at $i$ which is well known to have the standard Cauchy distribution \cite[Table 4.1]{Ransford}. Together with symmetry considerations, this shows that $\Real W_{\tau_\mathbb{U}}\sim \mathrm{U}[-1,1]$ under $\mathbb{P}_0$. In light of Corollary \ref{cor:uniform_curve} and Lemma \ref{lem:uniform_rate}, it follows that $\mathbb{U}$ is a minimal rate solution to the CSEP for $\mathrm{U}[-1,1]$. 
\end{proof}

\begin{acknowledgments}
The authors would like to thank Matthew Badger for bringing their attention to Walden and Ward's harmonic measure distributions as well as Robert Neel for pointing out that the boundary of $\mathbb{U}$ belongs to the Grim Reaper curve family. We also thank Renan Gross and Greg Markowsky for useful comments on an earlier draft, Rodrigo Ba\~{n}uelos and Luke Rogers for helpful discussions, and the anonymous reviewer for their careful reading and valuable remarks.
\end{acknowledgments}

\bibliography{GenThm}

\begin{thebibliography}{BMW20}

\bibitem[BC00]{conditioned_extremal}
R.~Ba{\~{n}}uelos and T.~Carroll.
\newblock Extremal problems for conditioned {B}rownian motion and the
  hyperbolic metric.
\newblock {\em Ann. Inst. Fourier (Grenoble)}, 50(5):1507--1532, 2000.

\bibitem[BC94]{Banuelos-Carrol1994}
R.~Ba{\~{n}}uelos and T.~Carroll.
\newblock Brownian motion and the fundamental frequency of a drum.
\newblock {\em Duke Math. J.}, 75(3):575--602, 1994.

\bibitem[BC11]{Banuelos-Carrol2011}
R.~Ba{\~{n}}uelos and T.~Carroll.
\newblock The maximal expected lifetime of {B}rownian motion.
\newblock {\em Math. Proc. R. Ir. Acad.}, 111A(1):1--11, 2011.

\bibitem[BMW20]{improved_Vogt}
R.~Ba{\~{n}}uelos, P.~Mariano, and J.~Wang.
\newblock Bounds for exit times of brownian motion and the first dirichlet
  eigenvalue for the laplacian.
\newblock arXiv:2003.06867, 2020.

\bibitem[BCH17]{extremal_Skorokhod}
M.~Beiglb\"{o}ck, A.~M.~G. Cox, and M.~Huesmann.
\newblock Optimal transport and {S}korokhod embedding.
\newblock {\em Invent. Math.}, 208(2):327--400, 2017.

\bibitem[vdB17]{Vandenberg-2017a}
M.~van~den Berg.
\newblock Spectral bounds for the torsion function.
\newblock {\em Integral Equations Operator Theory}, 88(3):387--400, 2017.

\bibitem[vdBC09]{vdB_Carroll}
M.~van~den Berg and T.~Carroll.
\newblock Hardy inequality and {$L^p$} estimates for the torsion function.
\newblock {\em Bull. Lond. Math. Soc.}, 41(6):980--986, 2009.

\bibitem[BM20a]{Baudabra-Markowsky2020a}
M.~Boudabra and G.~Markowsky.
\newblock On the {D}irichlet eigenvalue problem and the conformal {S}korokhod
  embedding problem.
\newblock arXiv:2003.02432, 2020.

\bibitem[BM20b]{Baudabra-Markowsky2019}
M.~Boudabra and G.~Markowsky.
\newblock Remarks on {G}ross’ technique for obtaining a conformal {S}korohod
  embedding of planar {B}rownian motion.
\newblock {\em Electron. Commun. Probab.}, 25:1--13, 2020.

\bibitem[Bur77]{Burkholder}
D.~L. Burkholder.
\newblock Exit times of {B}rownian motion, harmonic majorization, and {H}ardy
  spaces.
\newblock {\em Advances in Math.}, 26(2):182--205, 1977.

\bibitem[GS10]{Giorgi-Smits-2010}
T.~Giorgi and R.~G. Smits.
\newblock Principal eigenvalue estimates via the supremum of torsion.
\newblock {\em Indiana Univ. Math. J.}, 59(3):987--1011, 2010.

\bibitem[Gro19]{Gross2019}
R.~Gross.
\newblock A conformal {S}korokhod embedding.
\newblock {\em Electron. Commun. Probab.}, 24:Paper No. 68, 11, 2019.

\bibitem[Hal12]{Grim_Reaper}
H.~P. Halldorsson.
\newblock Self-similar solutions to the curve shortening flow.
\newblock {\em Trans. Amer. Math. Soc.}, 364(10):5285--5309, 2012.

\bibitem[HLP18]{Henrot}
A.~Henrot, I.~Lucardesi, and G.~Philippin.
\newblock On two functionals involving the maximum of the torsion function.
\newblock {\em ESAIM Control Optim. Calc. Var.}, 24(4):1585--1604, 2018.

\bibitem[HS19]{Steinerberger_Hoskins}
J.~G. Hoskins and S.~Steinerberger.
\newblock Towards optimal gradient bounds for the torsion function in the
  plane.
\newblock arXiv:1912.08376, 2019.

\bibitem[KS02]{Kawohl_Sweers}
B.~Kawohl and G.~Sweers.
\newblock Among all two-dimensional convex domains the disk is not optimal for
  the lifetime of a conditioned {B}rownian motion.
\newblock {\em J. Anal. Math.}, 86:335--357, 2002.

\bibitem[Mar11]{Markowsky2011}
G.~Markowsky.
\newblock On the expected exit time of planar {B}rownian motion from simply
  connected domains.
\newblock {\em Electron. Commun. Probab.}, 16:652--663, 2011.

\bibitem[Ob{\l{}}04]{Jan2004}
J.~Ob{\l{}}\'{o}j.
\newblock The skorokhod embedding problem and its offspring.
\newblock {\em Probab. Surv.}, 1:321--390, 2004.

\bibitem[{\O{}}ks83]{Oksendal}
B.~{\O{}}ksendal.
\newblock Projection estimates for harmonic measure.
\newblock {\em Ark. Mat.}, 21(2):191--203, 1983.

\bibitem[Pan20]{Panzo2020}
H.~Panzo.
\newblock Spectral upper bound for the torsion function of symmetric stable
  processes.
\newblock arXiv:2001.04972, 2020.

\bibitem[Pom92]{Pommerenke}
C.~Pommerenke.
\newblock {\em Boundary behaviour of conformal maps}, volume 299 of {\em
  Grundlehren der Mathematischen Wissenschaften [Fundamental Principles of
  Mathematical Sciences]}.
\newblock Springer-Verlag, Berlin, 1992.

\bibitem[Ran95]{Ransford}
T.~Ransford.
\newblock {\em Potential theory in the complex plane}, volume~28 of {\em London
  Mathematical Society Student Texts}.
\newblock Cambridge University Press, Cambridge, 1995.

\bibitem[Roo69]{Root1969}
D.~H. Root.
\newblock The existence of certain stopping times on {B}rownian motion.
\newblock {\em Ann. Math. Statist.}, 40:715--718, 1969.

\bibitem[Sko65]{Skorokhod-1965}
A.~V. Skorokhod.
\newblock {\em Studies in the theory of random processes}.
\newblock Translated from the Russian by Scripta Technica, Inc. Addison-Wesley
  Publishing Co., Inc., Reading, Mass., 1965.

\bibitem[SW16]{Snipes_Ward}
M.~A. Snipes and L.~A. Ward.
\newblock Harmonic measure distributions of planar domains: a survey.
\newblock {\em J. Anal.}, 24(2):293--330, 2016.

\bibitem[Szn98]{Sznitman}
A.-S. Sznitman.
\newblock {\em Brownian motion, obstacles and random media}.
\newblock Springer Monographs in Mathematics. Springer-Verlag, Berlin, 1998.

\bibitem[Vog19]{Vogt}
H.~Vogt.
\newblock {$L_\infty$}-{E}stimates for the {T}orsion {F}unction and
  {$L_\infty$}-{G}rowth of {S}emigroups {S}atisfying {G}aussian {B}ounds.
\newblock {\em Potential Anal.}, 51(1):37--47, 2019.

\bibitem[WW96]{Walden_Ward}
B.~L. Walden and L.~A. Ward.
\newblock Distributions of harmonic measure for planar domains.
\newblock In {\em X{VI}th {R}olf {N}evanlinna {C}olloquium ({J}oensuu, 1995)},
  pages 289--299. de Gruyter, Berlin, 1996.

\end{thebibliography}
\bibliographystyle{alphabbrev}

\end{document}